\theoremstyle{plain}
\newtheorem{thm}{Theorem}[section]
\newtheorem{cor}[thm]{Corollary}
\newtheorem{prop}[thm]{Proposition}
\theoremstyle{definition}
\newtheorem{exam}[thm]{Example}
\theoremstyle{remark}
\numberwithin{equation}{section}
\newcommand{\beast}{\begin{eqnarray*}}
\newcommand{\eeast}{\end{eqnarray*}}
\title{Strongly Regular Graphs \\ as Laplacian Extremal Graphs}
\author{Fan-Hsuan Lin\footnote{Corresponding author. E-mail address: fanhsuan.am03g@nctu.edu.tw (F.-H Lin).}
~\footnote{Department of Applied Mathematics, National Chiao Tung University, Taiwan R.O.C..}
\and Chih-wen Weng$^\dag$}
\begin{document}
\maketitle

\bibliographystyle{plain}

\begin{abstract}
The Laplacian spread of a graph is the difference between the largest eigenvalue and the second-smallest eigenvalue of the Laplacian matrix of the graph.
We find that the class of strongly regular graphs attains the maximum of largest eigenvalues, the minimum of second-smallest eigenvalues of Laplacian matrices and hence the maximum of Laplacian spreads among all simple connected graphs of fixed order, minimum degree, maximum degree, minimum size of common neighbors of two adjacent vertices and minimum size of common neighbors of two nonadjacent vertices. Some other extremal graphs are also provided.
\end{abstract}

{\bf keyword}
Laplacian matrix, Laplacian spread, strongly regular graph.

\section{Introduction}\label{s1}
Let $G=(V,E)$ be a simple connected  graph of order $n$  with vertex set $V=\left\{1,2,\cdots,n\right\}$ and edge set $E$. Let $A=A(G)$ be the {\it adjacency matrix} of $G$, i.e. the binary matrix with $ij$-entry  $1$ iff   $i$ and $j$ are distinct and adjacent. The {\it degree} $d_i$ of vertex $i \in V$ is the number $|G_1(i)|$, where $G_1(i)$ is the set of vertices which are adjacent to $i$. Let $D(G)$=diag($d_1,d_2,\cdots,d_n$) be the diagonal matrix with entries $d_1,d_2,\cdots,d_n$ in the diagonal. Then the matrix   $$ L(G)=D(G)-A(G)  $$  is called the {\it Laplacian matrix} of $G$. We call the eigenvalues of $L(G)$ the {\it Laplacian eigenvalues} of $G$. It is well-known that $L(G)$ is symmetric, positive semidefinite, and every row-sum being zero \cite{1973}, so we denote the Laplacian eigenvalues of $G$ in nonincreasing order as $\ell_1(G) \geq \ell_2(G) \geq \cdots \geq \ell_n(G)=0.$ The eigenvalues $\ell_1(G),$ $\ell_{n-1}(G)$ and $\ell_n(G)$ are called {\it Laplacian index}, {\it algebraic connectivity} and {\it trivial eigenvalue},  respectively.  The {\it Laplacian spread} of  $G$ is defined as $\mathscr{S}_L(G):=\ell_1(G)-\ell_{n-1}(G).$

A graph $G$ is called {\it $k$-regular} if any vertex has degree $k$. Moreover $G$ is called
{\it strongly regular} with parameters $(n,k,\lambda,\mu)$ if $G$ is a $k$-regular graph with order $n$ which has $\lambda$ (resp. $\mu$) common neighbors of any pair of two adjacent (resp. nonadjacent) vertices.
The Laplacian matrix of a $k$-regular graph is $kI-A$, so its eigenvalues are easily obtained from those of adjacency matrix. It is well-known that a strongly regular graph with parameters $(n,k,\lambda,\mu)$ and $n\not=k+1$  has two distinct nontrivial Laplacian eigenvalues $\ell_1, \ell_{n-1},$ and indeed
\begin{equation}\label{srg} \ell_1(G), \ell_{n-1}(G)=\dfrac{2k-\lambda+\mu\pm\sqrt{(\lambda-\mu)^2+4(k-\mu) }}{2}.\end{equation}
See for example \cite[Chapter 21]{LW:2001}. Note that $G$ is strongly regular iff its complement $G^c$ is strongly regular \cite[Theorem~1.3.1]{bcn}.

Let $\delta:=\min_{i\in V} d_i$ and $\Delta:= \max_{i\in V} d_i.$ Motivated by the definition of strongly regular graphs, we define another two graph parameters $\lambda(G)$ and $\mu(G)$ for any graph $G:$
\begin{align*}
\lambda(G):=\min_{ij\in E} |G_1(i)\cap G_1(j)|,\qquad
\mu(G):= \min_{ij\not\in E} |G_1(i)\cap G_1(j)|.
\end{align*}

In Section~\ref{s2}, we review some previously known bounds for the Laplacian index $\ell_1(G)$ and algebraic connectivity $\ell_{n-1}(G).$ Our main theorem, Theorem~\ref{mainthm} in Section~\ref{s3}, is an inequality involving the graph parameters $n$, $d_i$, $m_i$, $\lambda(G)$, $\mu (G)$ and a nontrivial eigenvalue $\ell$ with its corresponding eigenvector $(x_1, x_2, \ldots, x_n)^T$ of a graph $G,$ where $m_i=(\sum_{ji\in E}d_j)/d_i$ is called the {\it average $2$-degree} of a vertex $i$. We give two ways to eliminate the eigenvector parameters in the above inequality, and provide many lower bounds and upper bounds of the Laplacian eigenvalue $\ell$. Among them upper bounds for $\ell=\ell_1(G)$ and lower bounds  of $\ell=\ell_{n-1}(G)$ are of most interest. See Corollary~\ref{eig}, and \eqref{ee3}, \eqref{ee5} in Corollary~\ref{Ddelta}.  We find that the class of strongly regular graphs attains both bounds. In Section~\ref{s4}, we provide more extremal graphs, attaining bounds  of some but not all of the inequalities in Section~\ref{s3}. In Section \ref{s5}, 
the Laplacian index $\ell_1(G)$ and algebraic connectivity $\ell_{n-1}(G)$ for all connected $(n-3)$-regular graphs $G$ of order $n$ are determined. With a class of exceptions, they are extremal for all the inequalities obtained in this paper.

\section{Some known bounds}\label{s2}

We recall some basic properties of Laplacian matrices  \cite{newman} and known bounds about Laplacian index $\ell_1(G)$ and algebraic connectivity $\ell_{n-1}(G)$ in this section. For the basic properties, one can find from \cite{newman}.

The most important important property of $L(G)$ probably is
\begin{equation}\label{imp}X^\top L(G)X=\sum\limits_{j<k, jk\in E}(x_j-x_k)^2,\end{equation}
where $X$ is a column vector.
As the property $L(G)+L(G^c)=nI-J$, where $J$ is the all one matrix, the Laplacian matrices $L(G)$ and $L(G^c)$ of $G$ and its complement $G^c$ respectively share the same set of eigenvectors. Hence if $\ell$ is a nontrivial eigenvalue of $L(G)$ with associated eigenvector $X,$ then $n-\ell$ is an eigenvalue of $L(G^c)$ with the same associated eigenvector $X$. Thus an upper bound of $\ell_1(G^c)$ also gives a lower bound of $\ell_{n-1}(G).$

In 1973\cite{1973}, Fiedler showed the following upper bounded about $\ell_{n-1}(G)$
\begin{equation}\label{delta}
\ell_{n-1}(G)\leq \kappa(G) \leq \delta,
\end{equation}
intriguing the study of algebraic connectivity,
where $\kappa(G)$ is the vertex connectivity of $G$. 
Note that $\ell_{n-1}(G)=0$ iff $G$ is disconnected.

The upper bounds of Laplacian index $\ell_1(G)$ were studied by many authors.
In 1985\cite{1985}, Anderson and Morley showed that
\begin{equation}\label{1985}
\ell_1(G) \leq \underset{ij\in E }{\max}\left\{ d_i+d_j \right\}.
\end{equation}
Note that
   $d_i+m_i=d_i+(\sum_{ji\in E}d_j)/d_i\leq d_i + \underset{ji\in E}{\max}\{d_j\}\leq \underset{j i\in E}{\max}\{d_i+d_j\}.$
In 1998\cite{1998}, Merris improved the bound in(\ref{1985}) by showing
\begin{equation}
\ell_1(G) \leq \underset{i \in V}{\max}\left\{ d_i+m_i \right\}.
\end{equation}
As another way to improve the bound in(\ref{1985}), in 2000\cite{2000}, Rojo et al. showed
\begin{equation} \label{2000}
\ell_1(G) \leq \underset{ij\in E}{\max}\left\{ d_i+d_j-|G_1(i) \cap G_1(j)| \right\}.
\end{equation}
In 2001\cite{2001}, Li and Pan gave a bound, as follows
\begin{equation} \label{2001}
\ell_1(G) \leq \underset{i \in V}{\max}\left\{ \sqrt{2d_i(d_i+m_i)} \right\}.
\end{equation}
In 2004\cite{2004}, Zhang showed the following result, which is always better than the bound (\ref{2001}).
\begin{equation}\label{2004}
\ell_1(G) \leq \underset{i \in V}{\max}\left\{ d_i+\sqrt{d_i m_i}\right\}.
\end{equation}
One of our results in Corollary~\ref{eig} is an extension of (\ref{2004}).

For the lower bound of $\ell_1(G)$ in 1994\cite{1994}, Grone and Merris showed that
\begin{equation}\label{Delta}
\ell_1(G)\geq \Delta+1.
\end{equation}

The studies of Laplacian spread $\mathscr{S}_L(G)=\ell_1(G)-\ell_{n-1}(G)$ can be found in \cite{btf:2009, cw:2009, flt:2010, fxwl:2008, yl:2010}. They are interested in the graphs with a few more edges than the number of edges in  a tree. The results in this paper with different favor are about graphs of higher vertex connectivity.

\section{New bounds}\label{s3}

The following is our main theorem.

\begin{thm}\label{mainthm}
Let $G=(V,E)$ be a simple connected graph of order $n$. Let $\ell$ be a nontrivial Laplacian eigenvalue of $G$ with associated eigenvector $X=(x_1,x_2,\cdots,x_n)^\top$. Let $d_i$ and $m_i$ bee degree and average $2$-degree respectively of vertex $i\in V$, and let $\lambda\leq\lambda(G)$ and  $\mu\leq\mu(G)$ be two given numbers. Then
\begin{equation}\label{maineq}
\sum\limits_{i=1}^n [(d_i- \ell)^2-d_i m_i+\lambda \ell+\mu(n- \ell)]x_i^2 \leq 0.\end{equation}
Moreover, the equality in (\ref{maineq}) holds if and only if for any distinct  vertices $i, j\in V$, the following two statements hold:
\begin{align}
ij\in E {\rm ~~~and~~~} x_i\not=x_j \quad &\Rightarrow \quad |G_1(i)\cap G_1(j)|= \lambda(G), \label{equ1}\\
ij\not\in E {\rm ~~~and~~~} x_i\not=x_j \quad &\Rightarrow \quad |G_1(i)\cap G_1(j)|= \mu(G). \label{equ2}
\end{align}
\end{thm}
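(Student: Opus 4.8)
The plan is to start from the eigenvalue equation $L(G)X = \ell X$, which componentwise reads $d_i x_i - \sum_{j \in G_1(i)} x_j = \ell x_i$ for each vertex $i$. Squaring and summing over $i$ (weighting nothing extra yet) would give a clean identity, but to bring in the common-neighbor parameters we instead compute $\sum_i (d_i - \ell)^2 x_i^2$ in two ways. On one hand $(d_i - \ell)x_i = \sum_{j \in G_1(i)} x_j$, so $(d_i-\ell)^2 x_i^2 = \left(\sum_{j\in G_1(i)} x_j\right)^2 = \sum_{j,k \in G_1(i)} x_j x_k$. Summing over $i$ therefore produces $\sum_i \sum_{j,k \in G_1(i)} x_j x_k$, and reorganizing this double sum by the pair $(j,k)$ rather than by $i$ turns it into $\sum_{j,k} |G_1(j) \cap G_1(k)| \, x_j x_k$ (with the diagonal $j = k$ contributing $\sum_j d_j x_j^2$, since $|G_1(j)\cap G_1(j)| = d_j$).

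The next step is to split that reorganized sum into the diagonal part $\sum_j d_j x_j^2$, the ``adjacent pairs'' part $\sum_{jk \in E} 2|G_1(j)\cap G_1(k)| x_j x_k$, and the ``nonadjacent pairs'' part $\sum_{jk \notin E,\, j \neq k} 2|G_1(j)\cap G_1(k)| x_j x_k$. The diagonal term $\sum_j d_j x_j^2$ I would like to rewrite as $\sum_j d_j m_j x_j^2$ minus a sum of squared differences: indeed $\sum_j d_j m_j x_j^2 = \sum_j \left(\sum_{k \in G_1(j)} d_k\right) x_j^2 = \sum_{jk \in E}(d_j x_k^2 + d_k x_j^2)$, and the key observation is that this is not quite the adjacency-related sums we want, so I will instead use the standard manipulation that relates $\sum_{jk\in E}(x_j - x_k)^2$, $\sum_i d_i x_i^2$, and $\sum_{jk\in E} 2x_jx_k$. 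The goal of all this bookkeeping is to replace every product $x_j x_k$ over an adjacent pair with a coefficient involving $\lambda$ and over a nonadjacent pair with a coefficient involving $\mu$, using the inequalities $|G_1(j)\cap G_1(k)| \geq \lambda(G) \geq \lambda$ when $jk \in E$ and $\geq \mu(G) \geq \mu$ when $jk \notin E$; but since $x_jx_k$ can be negative, the right device is to write $2x_jx_k = x_j^2 + x_k^2 - (x_j-x_k)^2$ and bound the $-(x_j-x_k)^2$ terms (which have a definite sign) after multiplying by the nonnegative gap $|G_1(j)\cap G_1(k)| - \lambda$ or $-\mu$.

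Concretely, after substituting $2x_jx_k = x_j^2 + x_k^2 - (x_j - x_k)^2$, the ``clean'' part of the sum (coefficients exactly $\lambda$ on edges, $\mu$ on non-edges) can be evaluated in closed form because $\sum_{k \neq j, jk \in E} 1 = d_j$ and $\sum_{k \neq j, jk \notin E} 1 = n - 1 - d_j$; combining with $\ell = \ell$ appearing through $(d_i-\ell)^2$ and with the known identity $\sum_i d_i m_i x_i^2$ coming out of $\sum_i (d_i - \ell)(d_i x_i^2 - \ell x_i^2)$-type manipulations, one should land exactly on $\sum_i [(d_i-\ell)^2 - d_i m_i + \lambda\ell + \mu(n-\ell)] x_i^2$ on the left, with the leftover being $-\sum_{jk\in E}(|G_1(j)\cap G_1(k)| - \lambda)(x_j-x_k)^2 - \sum_{jk\notin E, j\neq k}(|G_1(j)\cap G_1(k)| - \mu)(x_j - x_k)^2 \leq 0$. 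This gives (\ref{maineq}), and the equality analysis is immediate: equality forces each term $(|G_1(j)\cap G_1(k)| - \lambda)(x_j-x_k)^2$ and $(|G_1(j)\cap G_1(k)| - \mu)(x_j-x_k)^2$ to vanish, i.e.\ whenever $x_j \neq x_k$ the common-neighbor count must equal $\lambda(G)$ on an edge or $\mu(G)$ on a non-edge --- but one must be slightly careful that $\lambda \leq \lambda(G)$ may be strict, so vanishing of $(|G_1(j)\cap G_1(k)| - \lambda)(x_j-x_k)^2$ with $x_j\ne x_k$ gives $|G_1(i)\cap G_1(j)| = \lambda$, and one then needs $\lambda = \lambda(G)$; the cleanest route is to note that some adjacent pair with $x_i \neq x_j$ always exists (else $X$ is constant, contradicting nontriviality and connectivity), forcing $\lambda(G) = \lambda$ at that pair, hence equality throughout requires $\lambda = \lambda(G)$ and $\mu = \mu(G)$, after which (\ref{equ1}) and (\ref{equ2}) follow.

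The main obstacle I anticipate is the sign-tracking in the double-counting step: getting the reorganization $\sum_i\sum_{j,k\in G_1(i)} x_jx_k = \sum_{j,k}|G_1(j)\cap G_1(k)|x_jx_k$ correct including the diagonal, and then correctly matching the ``exact $\lambda$, exact $\mu$'' closed-form sum against $-d_im_i + \lambda\ell + \mu(n-\ell)$ without an off-by-$d_i$ or off-by-$\ell$ error --- in particular verifying that $\sum_j d_j x_j^2$ produced by the diagonal, together with the $x_j^2+x_k^2$ pieces from the edge sums, genuinely reassembles into the stated $-d_im_i$ coefficient and not something involving $m_i$ in a different place. The eigenvalue $\ell$ must enter only through $(d_i-\ell)^2$ on the left and through $\lambda\ell + \mu(n-\ell)$, which suggests that $\sum_i(d_i-\ell)^2x_i^2$ should be expanded as $\sum_i(d_i^2 - 2d_i\ell + \ell^2)x_i^2$ and the cross term $-2\ell\sum_i d_ix_i^2$ handled via $\sum_i d_ix_i^2 = \sum_{jk\in E}(x_j^2+x_k^2)$, which is also how the $\lambda\ell$ and $\mu n$, $\mu\ell$ terms should appear; keeping these three uses of the same identity consistent is the delicate point.
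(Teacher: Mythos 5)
Your plan is essentially the paper's own proof: squaring the componentwise eigenvalue identity $(d_i-\ell)x_i=\sum_{j\in G_1(i)}x_j$ and summing is exactly the paper's $\Vert(D(G)-\ell I)X\Vert^2=\Vert A(G)X\Vert^2=X^\top A(G)^2X$, your reorganization by pairs is the identification of the entries of $A(G)^2$ with the common-neighbor counts $w_{jk}$ (diagonal $w_{jj}=d_j$), and the substitution $2x_jx_k=x_j^2+x_k^2-(x_j-x_k)^2$ together with the bounds $w_{jk}\ge\lambda$ on edges and $w_{jk}\ge\mu$ on non-edges, with the diagonal and $x_j^2+x_k^2$ pieces reassembling into $\sum_i d_i m_i x_i^2$, is precisely how (\ref{maineq}) is obtained there. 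The one ingredient you should state explicitly in your closed-form evaluation is that the non-edge sum $\sum_{j<k,\,jk\notin E}(x_j-x_k)^2$ equals $(n-\ell)\Vert X\Vert^2$ only because $\mathbf{1}^\top X=0$, which is exactly where nontriviality of $\ell$ enters (the paper packages this as $X$ being an eigenvector of $L(G^c)$ for $n-\ell$); your equality discussion, using that some edge with $x_i\ne x_j$ must exist to force $\lambda=\lambda(G)$, is in fact more careful than the paper's, though the parallel conclusion $\mu=\mu(G)$ is not automatic by the same argument, since a non-edge with $x_i\ne x_j$ need not exist.
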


\begin{proof}
Because $X$ is an eigenvector of $L(G)$ corresponding to $\ell$,
\begin{equation}\label{e1}
\Vert (D(G)-\ell I)X \Vert^2  = \Vert (D(G)-L(G))X \Vert^2= \Vert A(G)X \Vert^2.
\end{equation}
As the $ij$-entry of $A(G)^2$ is the number $w_{ij}$ of walks of length $2$ from $i$ to $j$ and noting that
$w_{ii}=d_i$, we have
\begin{align}
 \Vert A(G)X \Vert^2=& X^\top A(G)^2 X \\ =& \sum\limits_{i\in V}  d_{i} x_i^2 + 2 \sum\limits_{j<k} w_{jk}x_j x_k \nonumber\\
=& \sum\limits_{i\in V} d_{i} x_i^2 + \sum\limits_{j<k}w_{jk}(x_j^2+ x_k^2-(x_j-x_k)^2) \nonumber\\
=& \sum\limits_{i\in V}( \sum\limits_{ji\in E} d_{j})x_i^2) - \sum\limits_{\begin{subarray}{c} j<k \\ jk\in E\end{subarray}}w_{jk}(x_j-x_k)^2 -\sum\limits_{\begin{subarray}{c} j<k \\ jk\not\in E\end{subarray}}w_{jk}(x_j-x_k)^2. \label{e2}
\end{align}
As $\lambda\leq\lambda(G)=\min_{ij\in E}w_{ij}$ and   $\mu\leq\mu(G)=\min_{ij\not\in E} w_{ij}$ and by (\ref{e1}), (\ref{e2}),
we have
\begin{equation}\label{e3}\sum\limits_{i\in V} (d_i-\ell)^2x_i^2 \leq \sum\limits_{i\in V} d_i m_i x_i^2 - \lambda\sum\limits_{\begin{subarray}{c} j<k \\ jk\in E\end{subarray}}(x_j-x_k)^2-\mu\sum\limits_{\begin{subarray}{c} j<k \\ jk\not\in E\end{subarray}}(x_j-x_k)^2.\end{equation}
Applying (\ref{imp}) and that $n-\ell(G)$ is  eigenvalue of $L(G^c)$ with the same eigenvector $X$
 to (\ref{e3}), we have
\begin{align*}
\sum\limits_{i\in V} (d_i-\ell)^2x_i^2 \leq & \sum\limits_{i\in V} d_i m_i x_i^2 - \lambda X^\top L(G) X - \mu X^\top L(G^c) X \\
=& \sum\limits_{i\in V} d_i m_i x_i^2 - \lambda \ell\Vert X \Vert^2 - \mu(n- \ell)\Vert X \Vert^2 \\
= &\sum\limits_{i\in V} d_i m_i x_i^2 - \lambda \ell\sum\limits_{i\in V} x_i^2-\mu(n-  \ell)\sum\limits_{i=1}^n x_i^2,
\end{align*}
and (\ref{maineq}) immediately follows from this. Note that the equality holds in (\ref{maineq}) if and only if the equality in (\ref{e3}) holds, and this equivalent to (\ref{equ1}), (\ref{equ2}).
\end{proof}

The expression \begin{equation}\label{maineq2}
(d_i- \ell)^2-d_i m_i+\lambda \ell+\mu(n- \ell)=\ell^2-(2d_i-\lambda+\mu) \ell+(d_i^2-d_i m_i+\mu n) \end{equation}
 inside the summation in (\ref{maineq}) is a quadratic polynomial
in variable $\ell$ and is not positive  for some $i$.
Solving the quadratic polynomial,
we have  the following upper bound of Laplacian index $\ell_1(G)$, lower bound of the algebraic connectivity $\ell_{n-1}(G)$ and upper bound of Laplacian spread $\mathscr{S}_L(G)$ of $G$.

\begin{cor}\label{eig} Referring to the notations in Theorem \ref{mainthm}, the following three inequalities hold:
\begin{align*}\ell_1(G)\leq &\underset{i \in V}{\max}\left\{ \dfrac{2d_i-\lambda+\mu+\sqrt{4d_i m_i-4(\lambda-\mu)d_i+(\lambda-\mu)^2-4\mu n}}{2} \right\}, \\
\ell_{n-1}(G)\geq &\underset{i \in V}{\min}\left\{ \dfrac{2d_i-\lambda+\mu-\sqrt{4d_i m_i-4(\lambda-\mu)d_i+(\lambda-\mu)^2-4\mu n}}{2} \right\}  \\
\end{align*}
and
\begin{align*}
\mathscr{S}_L(G) \leq& \underset{i \in V}{\max}\left\{ \dfrac{2d_i-\lambda+\mu+\sqrt{4d_i m_i-4(\lambda-\mu)d_i+(\lambda-\mu)^2-4\mu n}}{2} \right\} \\
-&\underset{i\in V}{\min}\left\{ \dfrac{2d_i-\lambda+\mu-\sqrt{4d_i m_i-4(\lambda-\mu)d_i+(\lambda-\mu)^2-4\mu n}}{2} \right\},
\end{align*}
where the $\max$ and $\min$ are over vertices $i\in V$ and exclude the terms with  a negative term in the square root.
\qed
\end{cor}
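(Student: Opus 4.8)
The plan is to derive Corollary~\ref{eig} as a direct consequence of Theorem~\ref{mainthm} by a standard eigenvector/eigenvalue localization argument. Fix a nontrivial Laplacian eigenvalue $\ell$ of $G$ with associated eigenvector $X=(x_1,\dots,x_n)^\top\neq 0$, and consider the quadratic polynomial in $\ell$ displayed in \eqref{maineq2}, namely
\[
p_i(\ell):=(d_i-\ell)^2-d_im_i+\lambda\ell+\mu(n-\ell).
\]
Theorem~\ref{mainthm} asserts $\sum_{i=1}^n p_i(\ell)\,x_i^2\le 0$. Since the $x_i^2$ are nonnegative and not all zero, there must exist an index $i$ with $x_i\neq 0$ for which $p_i(\ell)\le 0$. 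This is the one nonroutine observation, and it is the main (small) obstacle: one has to phrase it carefully so as to handle the case where the discriminant of $p_i$ is negative — in that case $p_i$ is strictly positive everywhere (the leading coefficient of $p_i$ in $\ell$ is $1>0$), so no such $i$ with negative discriminant can occur, which is exactly why the statement excludes ``the terms with a negative term in the square root.''

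Next I would translate $p_i(\ell)\le 0$ into a two-sided bound on $\ell$. Writing $p_i(\ell)=\ell^2-(2d_i-\lambda+\mu)\ell+(d_i^2-d_im_i+\mu n)$ as in \eqref{maineq2}, the inequality $p_i(\ell)\le 0$ holds precisely when $\ell$ lies between the two roots
\[
r_i^{\pm}=\frac{2d_i-\lambda+\mu\pm\sqrt{(2d_i-\lambda+\mu)^2-4(d_i^2-d_im_i+\mu n)}}{2}
=\frac{2d_i-\lambda+\mu\pm\sqrt{4d_im_i-4(\lambda-\mu)d_i+(\lambda-\mu)^2-4\mu n}}{2},
\]
where the simplification of the discriminant is the routine algebra $(2d_i-\lambda+\mu)^2-4(d_i^2-d_im_i+\mu n)=4d_im_i-4(\lambda-\mu)d_i+(\lambda-\mu)^2-4\mu n$. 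Hence $r_i^-\le \ell\le r_i^+$ for at least one $i$ (one whose discriminant is nonnegative). In particular $\ell\le r_i^+\le\max_{i\in V}r_i^+$ and $\ell\ge r_i^-\ge\min_{i\in V}r_i^-$, where the max and min range over those vertices with nonnegative discriminant.

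Finally I would apply this with $\ell=\ell_1(G)$ to obtain the first inequality and with $\ell=\ell_{n-1}(G)$ to obtain the second; note that both are nontrivial eigenvalues since $G$ is connected ($\ell_{n-1}(G)>0$ by the remark after \eqref{delta}), so Theorem~\ref{mainthm} applies to each. Subtracting the second from the first gives
\[
\mathscr{S}_L(G)=\ell_1(G)-\ell_{n-1}(G)\le \max_{i\in V}r_i^+-\min_{i\in V}r_i^-,
\]
which is the third inequality. The whole argument is short; the only point needing care is the justification that one may restrict the max and min to vertices with nonnegative discriminant, which follows because a vertex with negative discriminant has $p_i>0$ everywhere and therefore cannot be the index selected above.
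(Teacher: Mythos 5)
Your proposal is correct and follows essentially the same route as the paper: the paper also observes that the quadratic \eqref{maineq2} must be nonpositive for some vertex $i$ (necessarily one with nonnegative discriminant), so the eigenvalue lies between its roots, and then specializes to $\ell_1(G)$ and $\ell_{n-1}(G)$ and subtracts to bound the spread. Your extra care about selecting an index with $x_i\neq 0$ and excluding vertices with negative discriminant is exactly the justification the paper leaves implicit.
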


Note that the above upper bound of $\ell_1(G)$ with $\lambda=0$ and $\mu=0$ is  \eqref{2004}, and with $\lambda=\lambda(G)$ and $\mu=0$ is previously given in \cite[Theorem 3.2]{2013}.

The following corollary is another application of (\ref{maineq}).
\begin{cor}\label{Ddelta}
Referring to the notations in Theorem \ref{mainthm}, the following three inequalities hold:
\begin{align}
\ell_1(G) \leq &~\dfrac{2\Delta-\lambda+\mu+\sqrt{(2\Delta-
\lambda+\mu)^2-4\mu n}}{2}, \label{ee3}\\
\ell_{n-1}(G)\geq &~ \dfrac{2\delta-\lambda+\mu-\sqrt{(2\delta-
\lambda+\mu)^2-4\mu n -4 \delta^2+4 \Delta^2}}{2}, \label{ee5}
\end{align}
and
\begin{align}
\mathscr{S}_L(G) \leq &~\Delta-\delta+\frac{1}{2}\left[ \sqrt{(2\Delta-\lambda+\mu)^2-4\mu n}\right. \nonumber\\
                      &~~~~~~~~~~~~~~~\left.+\sqrt{(2\delta-
\lambda+\mu)^2-4\mu n -4 \delta^2+4 \Delta^2}\right], \label{ee7}
\end{align}
where $\delta$ and $\Delta$ are the maximum degree and the minimum degree in $G$.
Moreover if $G$ is $k$-regular then
\begin{equation}
\mathscr{S}_L(G) \leq \sqrt{(2k-\lambda+\mu)^2-4\mu n}. \label{ee8}\end{equation}
\end{cor}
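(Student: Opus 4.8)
The plan is to derive all of \eqref{ee3}--\eqref{ee8} from Theorem~\ref{mainthm} by weakening the vertex-dependent quadratic in \eqref{maineq2} to a single quadratic, using the crude degree bounds $\delta\le d_i\le\Delta$ and the elementary estimate $d_i m_i\le\Delta^2$ (since $m_i=(\sum_{ji\in E}d_j)/d_i$ and each neighbour has degree at most $\Delta$, so $d_i m_i=\sum_{ji\in E}d_j\le d_i\Delta\le\Delta^2$). Let $p_i(\ell)=\ell^2-(2d_i-\lambda+\mu)\ell+(d_i^2-d_i m_i+\mu n)$ be the quadratic in \eqref{maineq2}. Theorem~\ref{mainthm} says $\sum_i p_i(\ell)x_i^2\le 0$ for any nontrivial eigenvalue $\ell$ with eigenvector $X\ne 0$, so $p_i(\ell)\le 0$ for at least one $i$; equivalently, $\ell$ lies between the two roots of $p_i$ for some $i$. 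Hence $\ell$ is at most the largest root of $p_i$ over all $i$, and at least the smallest root of $p_i$ over all $i$. The whole game is to bound these extreme roots by a single clean quadratic.

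For \eqref{ee3}, take $\ell=\ell_1(G)$. The larger root of $p_i$ is increasing in $d_i$ for the $-(2d_i-\lambda+\mu)\ell$ term and decreasing in $d_i m_i$; using $d_i\le\Delta$ and $d_i^2-d_i m_i\le \Delta^2-0$ is too lossy, so instead I bound $p_i$ from below by the quadratic $q(\ell)=\ell^2-(2\Delta-\lambda+\mu)\ell+\mu n$, which works because $d_i^2-d_i m_i\le 0$ is false in general but $d_i^2-d_i m_i=d_i(d_i-m_i)$ and one checks $p_i(\ell)\ge \ell^2-(2\Delta-\lambda+\mu)\ell+\mu n$ follows from $2d_i-\lambda+\mu\le 2\Delta-\lambda+\mu$ together with $d_i^2-d_im_i\ge 0$ when $d_i\ge m_i$; to cover the general case I instead use that $d_im_i\le \Delta^2$ gives $d_i^2-d_im_i\ge d_i^2-\Delta^2\ge -(\Delta^2-\delta^2)$, and I will need to check carefully which grouping makes the larger root of the resulting one-variable quadratic come out to $\frac{2\Delta-\lambda+\mu+\sqrt{(2\Delta-\lambda+\mu)^2-4\mu n}}{2}$ as displayed. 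The discriminant of $q$ is $(2\Delta-\lambda+\mu)^2-4\mu n$, which matches \eqref{ee3} exactly, so the correct grouping is $p_i(\ell)\ge q(\ell)$ with $q(\ell)=\ell^2-(2\Delta-\lambda+\mu)\ell+\mu n$, which requires $-(2d_i-\lambda+\mu)\ell\ge -(2\Delta-\lambda+\mu)\ell$ (true for $\ell\ge 0$, which holds since Laplacian eigenvalues are nonnegative) and $d_i^2-d_im_i+\mu n\ge \mu n$, i.e. $d_i^2\ge d_im_i$; when this last inequality fails I will absorb the deficit $d_im_i-d_i^2\le\Delta^2-\delta^2$ into the discriminant, yielding the $-4\delta^2+4\Delta^2$ correction that appears in \eqref{ee5} but not \eqref{ee3} --- so for \eqref{ee3} I expect to need the sharper observation $d_im_i\le d_i\Delta$ together with $d_i\le\Delta$, giving $d_i^2-d_im_i\ge d_i^2-d_i\Delta=d_i(d_i-\Delta)\ge \Delta(\Delta-\Delta)$ only when $d_i=\Delta$; the honest route is $d_i(d_i-\Delta)\ge 0$ fails, so I will instead note $d_i^2-d_im_i = d_i(d_i-m_i)$ and use $m_i\le \Delta$ so that the term $-d_im_i\ge -d_i\Delta$, then bound $d_i^2-d_i\Delta+\mu n$ below by its minimum over $\delta\le d_i\le\Delta$, which is attained at an endpoint --- this endpoint analysis is the one genuinely fiddly computation.

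For \eqref{ee5}, apply \eqref{ee3} to the complement $G^c$: its parameters satisfy $\Delta(G^c)=n-1-\delta(G)$, and $\lambda(G^c)$, $\mu(G^c)$ relate to $\mu(G)$, $\lambda(G)$ via the identity counting common neighbours in a graph and its complement. Since $n-\ell_{n-1}(G)$ is a Laplacian eigenvalue of $G^c$ bounded above by the right side of \eqref{ee3} applied to $G^c$, rearranging gives \eqref{ee5} after substituting the complement parameters and simplifying the radicand; the $-4\delta^2+4\Delta^2$ term arises precisely from $d_i m_i$ in $G^c$ not being controlled as tightly as in $G$. Then \eqref{ee7} is immediate by subtracting \eqref{ee5} from \eqref{ee3}, and \eqref{ee8} follows by setting $\delta=\Delta=k$, whereupon the $-4\delta^2+4\Delta^2$ term vanishes and the two radicals coincide, giving $\mathscr{S}_L(G)\le \sqrt{(2k-\lambda+\mu)^2-4\mu n}$.

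The main obstacle is getting the algebra of the complement parameters exactly right: one must verify that $|G^c_1(i)\cap G^c_1(j)|$ for nonadjacent (in $G^c$, i.e. adjacent in $G$) vertices equals $n-2d_i+\lambda$-type expressions only under regularity, and in the non-regular case the bounds $\lambda(G^c)\ge$ something and $\mu(G^c)\ge$ something must be tracked through the substitution so that the final radicands in \eqref{ee5}--\eqref{ee7} come out as stated. I expect the cleanest writeup applies Corollary~\ref{Ddelta}'s \eqref{ee3} to $G^c$ as a black box and does bookkeeping, rather than re-deriving from Theorem~\ref{mainthm}; the $k$-regular specialization \eqref{ee8} is then a one-line check.
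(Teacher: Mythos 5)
Your plan fails at both \eqref{ee3} and \eqref{ee5}, and in each case the missing ingredient is a previously known eigenvalue bound that the paper feeds into the argument. For \eqref{ee3}, your coefficient-wise reduction of $p_i(\ell)$ to $q(\ell)=\ell^2-(2\Delta-\lambda+\mu)\ell+\mu n$ needs $d_i^2\ge d_im_i$, which fails in general, as you noticed; but your fallback (lower-bounding the constant term by the minimum of $d_i^2-d_i\Delta+\mu n$ over $\delta\le d_i\le\Delta$) only yields a quadratic with a strictly smaller constant term, hence a strictly larger upper root than the one displayed in \eqref{ee3} whenever $\delta<\Delta$ (and, incidentally, that minimum sits at $d_i=\Delta/2$ when $\Delta/2\ge\delta$, not at an endpoint). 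The deficit $d_im_i-d_i^2\le d_i(\Delta-d_i)$ can only be absorbed by the linear term $2(\Delta-d_i)\ell_1$, which requires a lower bound on $\ell_1$ relative to $d_i$; the paper gets this from Grone--Merris \eqref{Delta}, $\ell_1(G)\ge\Delta+1>d_i$, and packages it as $(d_i-\ell_1)^2\ge(\Delta-\ell_1)^2$ together with $d_im_i\le\Delta^2$, so that $(\Delta-\ell_1)^2-\Delta^2+\lambda\ell_1+\mu(n-\ell_1)\le 0$; the $\Delta^2$ cancels and the clean radicand $(2\Delta-\lambda+\mu)^2-4\mu n$ appears. You never invoke any lower bound on $\ell_1$, so your argument cannot close.

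For \eqref{ee5}, the complement route provably gives a weaker bound. Using $\Delta(G^c)=n-1-\delta$, $\mu(G^c)\ge n-2\Delta+\lambda$, $\lambda(G^c)\ge n-2-2\Delta+\mu$, and $\ell_{n-1}(G)=n-\ell_1(G^c)$, applying \eqref{ee3} to $G^c$ yields $\ell_{n-1}(G)\ge\frac{1}{2}\bigl(2\delta-\lambda+\mu-\sqrt{(2\delta-\lambda+\mu)^2-4\mu n+8n(\Delta-\delta)}\bigr)$, and since $\Delta+\delta<2n$ the term $8n(\Delta-\delta)$ strictly exceeds the stated correction $4\Delta^2-4\delta^2=4(\Delta+\delta)(\Delta-\delta)$ whenever $\Delta>\delta$; so you recover \eqref{ee5} only in the regular case (there is also the minor point that $G^c$ need not be connected, while the corollary is stated for connected graphs). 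The paper instead argues directly and symmetrically to \eqref{ee3}: for $\ell=\ell_{n-1}(G)$ choose $j$ with the quadratic \eqref{maineq2} nonpositive, use Fiedler's bound \eqref{delta}, $\ell_{n-1}(G)\le\delta\le d_j$, together with $d_jm_j\le\Delta^2$, to get $(\delta-\ell_{n-1})^2-\Delta^2+\lambda\ell_{n-1}+\mu(n-\ell_{n-1})\le 0$, and solves this quadratic; here $\delta^2$ does not cancel $\Delta^2$, which is exactly the origin of the $-4\delta^2+4\Delta^2$ term. Your derivations of \eqref{ee7} from \eqref{ee3} and \eqref{ee5}, and of \eqref{ee8} by setting $\delta=\Delta=k$, match the paper and are fine once the first two inequalities are actually established.
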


\begin{proof}
By \eqref{maineq} with $\ell=\ell_1(G)$ there exists $i\in V$ such that the term in \eqref{maineq2} is not positive.
Using $\ell_1(G)\geq \Delta+1 > d_i,$ and $\Delta\geq m_i$, we have
$$ (\Delta- \ell_1(G))^2-\Delta^2+\lambda \ell_1(G)+\mu(n- \ell_1(G))  \leq (d_i- \ell_1(G))^2-d_i m_i+\lambda \ell_1(G)+\mu(n- \ell_1(G))\leq 0.$$
Solving the above quadratic inequality on the left for $\ell_1(G)$,
we have (\ref{ee3}). Similarly, by considering
$\ell=\ell_{n-1}(G)$  in \eqref{maineq}, there exists $j\in V$ such that  \eqref{maineq2}  with $i=j$ is not positive.
Using $\ell_{n-1}(G)\leq \delta \leq d_j$ and $\Delta\geq m_i$, we have
$$ (\delta- \ell_{n-1}(G))^2-\Delta^2+\lambda \ell_{n-1}(G)+\mu(n- \ell_{n-1}(G))  \leq (d_j- \ell_{n-1}(G))^2-d_j m_j+\lambda \ell_{n-1}(G)+\mu(n- \ell_{n-1}(G))\leq 0.$$
Solving the quadratic inequality on the left for $\ell_{n-1}(G)$,
we have (\ref{ee5}). The line (\ref{ee7}) is immediate from (\ref{ee3}), (\ref{ee5}), and (\ref{ee8}) is from (\ref{ee7}).
\end{proof}
Next we prove that the strongly regular graphs satisfy all the above   equalities.
\begin{cor}
If $G$ is a strongly regular graph with parameters $(n,k,\lambda(G),\mu(G))$, then $k=\delta=\Delta$ and  the equality in (\ref{maineq}), the three  equalities in Corollary~\ref{eig} and the three equalities (\ref{ee3}), (\ref{ee5}), (\ref{ee7}) all hold for $\lambda=\lambda(G)$ and $\mu=\mu(G)$.
\end{cor}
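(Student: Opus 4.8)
The plan is to verify directly that a strongly regular graph $G$ with parameters $(n,k,\lambda(G),\mu(G))$ satisfies every equality in sight, by exploiting the rigidity of its combinatorial structure. First I would note that $G$ being $k$-regular immediately gives $\delta=\Delta=k$, and also $m_i=k$ for every vertex $i$, since $m_i=(\sum_{ji\in E}d_j)/d_i=(k\cdot k)/k=k$. Moreover, by definition of a strongly regular graph, $|G_1(i)\cap G_1(j)|=\lambda(G)$ for \emph{every} adjacent pair and $=\mu(G)$ for \emph{every} nonadjacent pair, so the implications \eqref{equ1} and \eqref{equ2} hold trivially (the conclusions hold with no hypothesis needed). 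By the ``moreover'' clause of Theorem~\ref{mainthm}, the equality in \eqref{maineq} therefore holds for $\lambda=\lambda(G)$, $\mu=\mu(G)$, and \emph{any} nontrivial eigenvalue $\ell$ with its eigenvector.

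Next I would push this equality through to the corollaries. Substituting $d_i=m_i=k$ into the quadratic \eqref{maineq2}, each summand becomes the single polynomial $p(\ell)=\ell^2-(2k-\lambda(G)+\mu(G))\ell+(k^2-k^2+\mu(G)n)=\ell^2-(2k-\lambda(G)+\mu(G))\ell+\mu(G)n$, independent of $i$. Since equality holds in \eqref{maineq} with $X\neq 0$, we get $p(\ell)=0$ for every nontrivial eigenvalue $\ell$; in particular $p(\ell_1(G))=0$ and $p(\ell_{n-1}(G))=0$. The two roots of $p$ are exactly $\tfrac{1}{2}\bigl(2k-\lambda(G)+\mu(G)\pm\sqrt{(2k-\lambda(G)+\mu(G))^2-4\mu(G)n}\bigr)$, and a short computation shows the discriminant equals $4k m_i-4(\lambda(G)-\mu(G))k+(\lambda(G)-\mu(G))^2-4\mu(G)n$ when $d_i=m_i=k$, so these roots coincide with the bounds appearing in Corollary~\ref{eig}; since $\ell_1(G)$ is the larger root and $\ell_{n-1}(G)$ the smaller (here one may invoke \eqref{srg}, or simply that $G$ has only two nontrivial eigenvalues), the first two equalities in Corollary~\ref{eig} hold, and subtracting gives equality in the Laplacian spread bound there. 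The same root expressions, with $\Delta=\delta=k$ and $m_i=k$, match the right-hand sides of \eqref{ee3}, \eqref{ee5}, \eqref{ee7} (note $-4\delta^2+4\Delta^2=0$), so those three equalities follow as well; indeed one should check the chain of inequalities in the proof of Corollary~\ref{Ddelta} collapses because $\ell_1(G)\geq\Delta+1>d_i=k$ and $\ell_{n-1}(G)\leq\delta=k=d_j$ still hold while $d_i=m_i=\Delta=\delta$.

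There is essentially no hard step here — the proof is a matter of bookkeeping — but the one point requiring genuine care is confirming that the two nontrivial Laplacian eigenvalues $\ell_1(G)$ and $\ell_{n-1}(G)$ are precisely the two roots of $p(\ell)$ in the correct order, rather than, say, a double root or a situation where the discriminant is negative. This is where the hypothesis $n\neq k+1$ implicit in \eqref{srg} matters (for $n=k+1$, i.e. $G=K_n$, there is only one nontrivial eigenvalue $n$, and one would handle it separately by checking $p(n)=0$ directly, using $\mu(G)=0$ and the convention that terms with negative discriminant are excluded). I would also remark that equality in \eqref{ee8} for the $k$-regular case is then immediate from \eqref{ee7}. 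Finally, I would mention that the converse-flavored observation — that these equalities \emph{characterize} strongly regular graphs among graphs with the relevant parameters fixed — is what the paper's title and abstract advertise, but the statement to be proved here is only the ``if'' direction, which the above dispatches.
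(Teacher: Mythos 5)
Your proposal is correct and follows essentially the same route as the paper: the paper's entire proof is the observation that \eqref{equ1} and \eqref{equ2} hold automatically in a strongly regular graph, so equality in \eqref{maineq} follows from the ``moreover'' clause of Theorem~\ref{mainthm}, with the remaining identifications (roots of the quadratic \eqref{maineq2} with $d_i=m_i=\delta=\Delta=k$ matching \eqref{srg} and the bounds in Corollaries~\ref{eig} and~\ref{Ddelta}) left as ``clear.'' You simply spell out that bookkeeping explicitly, which is fine; your side remark about the degenerate case $G=K_n$ is unnecessary since that case is excluded by the usual convention (and by the paper's use of \eqref{srg} with $n\neq k+1$).
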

\begin{proof}
This is clear since (\ref{equ1}) and (\ref{equ2}) hold in a strongly regular graph.
\end{proof}

\section{Other extremal graphs}\label{s4}

A graph is  {\it extremal} for an inequality holding for graphs if the graph attains the equality. In this section, we shall provide extremal graphs for inequalities mentioned in the previous section, excluding strongly regular graphs. Throughout this section we assume $\lambda=\lambda(G)$ and $\mu=\mu(G).$
Let
$$\alpha_1(G)=\underset{i \in V}{\max}\left\{ \dfrac{2d_i-\lambda+\mu+\sqrt{4d_i m_i-4(\lambda-\mu)d_i+(\lambda-\mu)^2-4\mu n}}{2} \right\}$$ $$\left( \text{resp. }\beta_1(G)=\underset{i \in V}{\min}\left\{ \dfrac{2d_i-\lambda+\mu-\sqrt{4d_i m_i-4(\lambda-\mu)d_i+(\lambda-\mu)^2-4\mu n}}{2} \right\}\right)$$
denote the upper bound (resp. lower bound) of Laplacian index $\ell_1(G)$  (resp. algebraic connectivity $\ell_{n-1}(G)$) described in Corollary \ref{eig} and let
$$\alpha_2(G)=\dfrac{2\Delta-\lambda+\mu+\sqrt{(2\Delta-
\lambda+\mu)^2-4\mu n}}{2}$$
$$\left( \text{resp. }\beta_2(G)= \dfrac{2\delta-\lambda+\mu-\sqrt{(2\delta-
\lambda+\mu)^2-4\mu n -4 \delta^2+4 \Delta^2}}{2}\right)$$
denote the upper bound (resp. lower bound) of Laplacian index $\ell_1(G)$ (resp. algebraic connectivity $\ell_{n-1}(G)$) described in Corollary \ref{Ddelta}.

\begin{exam} The graph $G=X_8$ depicted on the left of Figure~\ref{pi1}  is $3$-regular of order $8$ with $\lambda(X_8)=0,$ $\mu(X_8)=1$ and $\ell_1(X_8), \ell_{7}(X_8)=(7\pm \sqrt{17})/2=\ell, \mathscr{S}_L(G)= \sqrt{17}$, so is extremal for (\ref{maineq}). Note that $\alpha_1(X_8)=\alpha_2(X_8)=(7+ \sqrt{17})/2$ and $\beta_1(X_8)=\beta_2(X_8)=(7- \sqrt{17})/2$. Hence $X_8$ is extremal for the three inequalities in Corollary~\ref{eig}, (\ref{ee3}), (\ref{ee5}) and (\ref{ee7}). On the other hand, the complement graph $G^c=X_8^c$ of $X_8$ has  $\lambda(X_8^c)=1,$ $\mu(X_8^c)=2$ and $\ell_1(X_8^c), \ell_{7}(X_8^c)=(9\pm \sqrt{17})/2=\ell, \mathscr{S}_L(X_8^c)= \sqrt{17}$, so is also extremal for (\ref{maineq}). Note that $\alpha_1(X_8^c)=\alpha_2(X_8^c)=(9+ \sqrt{17})/2$ and $\beta_1(X_8^c)=\beta_2(X_8^c)=(9- \sqrt{17})/2$. Hence $X_8^c$ is extremal for the three inequalities in Corollary~\ref{eig}, (\ref{ee3}), (\ref{ee5}) and (\ref{ee7}).
\end{exam}

\bigskip

\begin{figure}[htb]
\begin{center}
$\begin{array}{ccc}
\begin{tikzpicture}[scale=0.9, line cap=round,line join=round,>=triangle 45,x=1.0cm,y=1.0cm]
\draw (4.79507888805,1.79009693206)-- (3.4436255862,1.13927132338);
\draw (3.4436255862,1.13927132338)-- (6.14653218991,1.13927132338);
\draw (6.14653218991,1.13927132338)-- (4.79507888805,1.79009693206);
\draw (3.10984418527,-0.323120544893)-- (4.04507888805,-1.4958677686);
\draw (4.04507888805,-1.4958677686)-- (4.79507888805,0.0370999248685);
\draw (4.79507888805,0.0370999248685)-- (5.54507888805,-1.4958677686);
\draw (5.54507888805,-1.4958677686)-- (6.48031359084,-0.323120544893);
\draw (6.48031359084,-0.323120544893)-- (4.04507888805,-1.4958677686);
\draw (3.10984418527,-0.323120544893)-- (5.54507888805,-1.4958677686);
\draw (3.4436255862,1.13927132338)-- (3.10984418527,-0.323120544893);
\draw (4.79507888805,1.79009693206)-- (4.79507888805,0.0370999248685);
\draw (6.14653218991,1.13927132338)-- (6.48031359084,-0.323120544893);

\draw [fill=black] (4.04507888805,-1.4958677686) circle (1.5pt);
\draw[color=black] (4.0448985725,-1.75) node {$7$};
\draw [fill=black] (5.54507888805,-1.4958677686) circle (1.5pt);
\draw[color=black] (5.53250187829,-1.75) node {$8$};
\draw [fill=black] (6.48031359084,-0.323120544893) circle (1.5pt);
\draw[color=black] (6.67450037566,-0.263425995492) node {$6$};
\draw [fill=black] (6.14653218991,1.13927132338) circle (1.5pt);
\draw[color=black] (6.35894815928,1.25422990233) node {$3$};
\draw [fill=black] (4.79507888805,1.79009693206) circle (1.5pt);
\draw[color=black] (4.7962133734,2.11072877536) node {$1$};
\draw [fill=black] (3.4436255862,1.13927132338) circle (1.5pt);
\draw[color=black] (3.20342599549,1.25422990233) node {$2$};
\draw [fill=black] (3.10984418527,-0.323120544893) circle (1.5pt);
\draw[color=black] (2.84279489106,-0.248399699474) node {$4$};
\draw [fill=black] (4.79507888805,0.0370999248685) circle (1.5pt);
\draw[color=black] (5.00658151766,0.112231404959) node {$5$};
\end{tikzpicture} &~~~~~~~~~~~~~~~~~~&
\begin{tikzpicture}[scale=0.9, line cap=round,line join=round,>=triangle 45,x=1.0cm,y=1.0cm]
\draw (4.79507888805,1.79009693206)-- (3.10984418527,-0.323120544893);
\draw (4.79507888805,1.79009693206)-- (4.04507888805,-1.4958677686);
\draw (4.79507888805,1.79009693206)-- (5.54507888805,-1.4958677686);
\draw (4.79507888805,1.79009693206)-- (6.48031359084,-0.323120544893);
\draw (3.4436255862,1.13927132338)-- (4.04507888805,-1.4958677686);
\draw (3.4436255862,1.13927132338)-- (6.48031359084,-0.323120544893);
\draw (3.4436255862,1.13927132338)-- (4.79507888805,0.0370999248685);
\draw (3.10984418527,-0.323120544893)-- (4.79507888805,0.0370999248685);
\draw (3.10984418527,-0.323120544893)-- (6.48031359084,-0.323120544893);
\draw (3.10984418527,-0.323120544893)-- (6.14653218991,1.13927132338);
\draw (4.04507888805,-1.4958677686)-- (6.14653218991,1.13927132338);
\draw (4.04507888805,-1.4958677686)-- (5.54507888805,-1.4958677686);
\draw (5.54507888805,-1.4958677686)-- (6.14653218991,1.13927132338);
\draw (5.54507888805,-1.4958677686)-- (3.4436255862,1.13927132338);
\draw (6.48031359084,-0.323120544893)-- (4.79507888805,0.0370999248685);
\draw (4.79507888805,0.0370999248685)-- (6.14653218991,1.13927132338);

\draw [fill=black] (4.04507888805,-1.4958677686) circle (1.5pt);
\draw[color=black] (4.0448985725,-1.75) node {$7$};
\draw [fill=black] (5.54507888805,-1.4958677686) circle (1.5pt);
\draw[color=black] (5.53250187829,-1.75) node {$8$};
\draw [fill=black] (6.48031359084,-0.323120544893) circle (1.5pt);
\draw[color=black] (6.67450037566,-0.263425995492) node {$6$};
\draw [fill=black] (6.14653218991,1.13927132338) circle (1.5pt);
\draw[color=black] (6.35894815928,1.25422990233) node {$3$};
\draw [fill=black] (4.79507888805,1.79009693206) circle (1.5pt);
\draw[color=black] (4.7962133734,2.11072877536) node {$1$};
\draw [fill=black] (3.4436255862,1.13927132338) circle (1.5pt);
\draw[color=black] (3.20342599549,1.25422990233) node {$2$};
\draw [fill=black] (3.10984418527,-0.323120544893) circle (1.5pt);
\draw[color=black] (2.84279489106,-0.248399699474) node {$4$};
\draw [fill=black] (4.79507888805,0.0370999248685) circle (1.5pt);
\draw[color=black] (4.84,-0.167) node {$5$};
\end{tikzpicture}
\end{array}$
\end{center}
\caption{The graph $X_8$ on the left has $\lambda(X_8)=0,$ $\mu(X_8)=1$  and its complement graph $X_8^c$ on the right has $\lambda(X_8^c)=1,$ $\mu(X_8^c)=2$.}\label{pi1}
\end{figure}
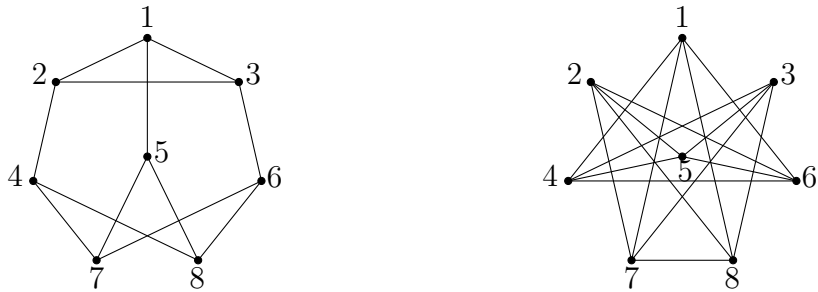

\begin{exam}\label{Y_8}
The graph $G=Y_8$ depicted on the left of Figure~\ref{YZ} is obtained from the complete graph $K_8$ of order $8$   by deleting two vertex disjoint cycles of order $4$. It is $5$-regular of order $8$ with $\lambda(Y_8)=2$, $\mu(Y_8)=4,$ $\ell_1(Y_8)=8$ and $\ell_7(Y_8)=4,$ so is extremal for (\ref{maineq}) with $\ell=8,4$ respectively. Note that $\alpha_1(Y_8)=\alpha_2(Y_8)=8$, $\beta_1(Y_8)=\beta_2(Y_8)=4$. Hence $G$ is extremal for for the three inequalities in Corollary~\ref{eig}, (\ref{ee3}), (\ref{ee5}) and (\ref{ee7}).
\end{exam}

\bigskip

\begin{figure}[htb]
\begin{center}
$\begin{array}{ccc}
\begin{tikzpicture}[scale=0.8, line cap=round,line join=round,>=triangle 45,x=1.0cm,y=1.0cm]
\draw [line width=0.4pt,dotted] (0.,4.62132034356)-- (1.5,4.62132034356);
\draw [line width=0.4pt,dotted] (2.56066017178,3.56066017178)-- (2.56066017178,2.06066017178);
\draw [line width=0.4pt,dotted] (2.56066017178,2.06066017178)-- (1.5,1.);
\draw (1.5,1.)-- (2.56066017178,3.56066017178);
\draw (0.,1.)-- (-1.06066017178,2.06066017178);
\draw [line width=0.4pt,dotted] (-1.06066017178,2.06066017178)-- (-1.06066017178,3.56066017178);
\draw (-1.06066017178,3.56066017178)-- (0.,1.);
\draw (0.,4.62132034356)-- (2.56066017178,3.56066017178);
\draw (0.,4.62132034356)-- (2.56066017178,2.06066017178);
\draw (0.,4.62132034356)-- (1.5,1.);
\draw (1.5,4.62132034356)-- (-1.06066017178,3.56066017178);
\draw [line width=0.4pt,dotted] (1.5,4.62132034356)-- (-1.06066017178,2.06066017178);
\draw (1.5,4.62132034356)-- (0.,1.);
\draw (-1.06066017178,3.56066017178)-- (2.56066017178,3.56066017178);
\draw (2.56066017178,2.06066017178)-- (-1.06066017178,2.06066017178);
\draw [line width=0.4pt,dotted] (0.,1.)-- (1.5,1.);
\draw (0.,4.62132034356)-- (-1.06066017178,2.06066017178);
\draw [line width=0.4pt,dotted] (-1.06066017178,3.56066017178)-- (0.,4.62132034356);
\draw (1.5,4.62132034356)-- (2.56066017178,3.56066017178);
\draw [line width=0.4pt,dotted] (2.56066017178,3.56066017178)-- (0.,1.);
\draw (0.,1.)-- (2.56066017178,2.06066017178);
\draw (-1.06066017178,2.06066017178)-- (2.56066017178,3.56066017178);
\draw (-1.06066017178,2.06066017178)-- (1.5,1.);
\draw (0.,1.)-- (0.,4.62132034356);
\draw (1.5,1.)-- (-1.06066017178,3.56066017178);
\draw (1.5,1.)-- (1.5,4.62132034356);
\draw (2.56066017178,2.06066017178)-- (-1.06066017178,3.56066017178);
\draw (2.56066017178,2.06066017178)-- (1.5,4.62132034356);

\draw [fill=black] (0.,1.) circle (1.5pt);
\draw[color=black] (-0.06,0.76) node {6};
\draw [fill=black] (1.5,1.) circle (1.5pt);
\draw[color=black] (1.48,0.74) node {5};
\draw [fill=black] (2.56066017178,2.06066017178) circle (1.5pt);
\draw[color=black] (2.82,2.2) node {4};
\draw [fill=black] (2.56066017178,3.56066017178) circle (1.5pt);
\draw[color=black] (2.9,3.68) node {3};
\draw [fill=black] (1.5,4.62132034356) circle (1.5pt);
\draw[color=black] (1.54,5.) node {2};
\draw [fill=black] (0.,4.62132034356) circle (1.5pt);
\draw[color=black] (0.04,4.96) node {1};
\draw [fill=black] (-1.06066017178,3.56066017178) circle (1.5pt);
\draw[color=black] (-1.4,3.64) node {8};
\draw [fill=black] (-1.06066017178,2.06066017178) circle (1.5pt);
\draw[color=black] (-1.38,2.22) node {7};
\end{tikzpicture}&~~~~~~~~~~~~~~~~~~&
\begin{tikzpicture}[scale=0.8, line cap=round,line join=round,>=triangle 45,x=1.0cm,y=1.0cm]
\draw [line width=0.4pt,dotted] (0.,4.62132034356)-- (1.5,4.62132034356);
\draw [line width=0.4pt,dotted] (2.56066017178,3.56066017178)-- (2.56066017178,2.06066017178);
\draw [line width=0.4pt,dotted] (2.56066017178,2.06066017178)-- (1.5,1.);
\draw [line width=0.4pt,dotted] (1.5,1.)-- (2.56066017178,3.56066017178);
\draw [line width=0.4pt,dotted] (0.,1.)-- (-1.06066017178,2.06066017178);
\draw [line width=0.4pt,dotted] (-1.06066017178,2.06066017178)-- (-1.06066017178,3.56066017178);
\draw (-1.06066017178,3.56066017178)-- (0.,1.);
\draw (0.,4.62132034356)-- (2.56066017178,3.56066017178);
\draw (0.,4.62132034356)-- (2.56066017178,2.06066017178);
\draw (0.,4.62132034356)-- (1.5,1.);
\draw (1.5,4.62132034356)-- (-1.06066017178,3.56066017178);
\draw (1.5,4.62132034356)-- (-1.06066017178,2.06066017178);
\draw [line width=0.4pt,dotted] (1.5,4.62132034356)-- (0.,1.);
\draw (-1.06066017178,3.56066017178)-- (2.56066017178,3.56066017178);
\draw (2.56066017178,2.06066017178)-- (-1.06066017178,2.06066017178);
\draw (0.,1.)-- (1.5,1.);
\draw (0.,4.62132034356)-- (-1.06066017178,2.06066017178);
\draw [line width=0.4pt,dotted] (-1.06066017178,3.56066017178)-- (0.,4.62132034356);
\draw (1.5,4.62132034356)-- (2.56066017178,3.56066017178);
\draw (2.56066017178,3.56066017178)-- (0.,1.);
\draw (0.,1.)-- (2.56066017178,2.06066017178);
\draw (-1.06066017178,2.06066017178)-- (2.56066017178,3.56066017178);
\draw (-1.06066017178,2.06066017178)-- (1.5,1.);
\draw (0.,1.)-- (0.,4.62132034356);
\draw (1.5,1.)-- (-1.06066017178,3.56066017178);
\draw (1.5,1.)-- (1.5,4.62132034356);
\draw (2.56066017178,2.06066017178)-- (-1.06066017178,3.56066017178);
\draw (2.56066017178,2.06066017178)-- (1.5,4.62132034356);

\draw [fill=black] (0.,1.) circle (1.5pt);
\draw[color=black] (-0.06,0.76) node {6};
\draw [fill=black] (1.5,1.) circle (1.5pt);
\draw[color=black] (1.48,0.74) node {5};
\draw [fill=black] (2.56066017178,2.06066017178) circle (1.5pt);
\draw[color=black] (2.82,2.2) node {4};
\draw [fill=black] (2.56066017178,3.56066017178) circle (1.5pt);
\draw[color=black] (2.9,3.68) node {3};
\draw [fill=black] (1.5,4.62132034356) circle (1.5pt);
\draw[color=black] (1.54,5.) node {2};
\draw [fill=black] (0.,4.62132034356) circle (1.5pt);
\draw[color=black] (0.04,4.96) node {1};
\draw [fill=black] (-1.06066017178,3.56066017178) circle (1.5pt);
\draw[color=black] (-1.4,3.64) node {8};
\draw [fill=black] (-1.06066017178,2.06066017178) circle (1.5pt);
\draw[color=black] (-1.38,2.22) node {7};
\end{tikzpicture}
\end{array}$
\end{center}
\caption{The graph $Y_8$ on the left has $\lambda(Y_8)=2,$ $\mu(Y_8)=4$,  and the graph $Z_8$ on the right has $\lambda(Z_8)=2,$ $\mu(Z_8)=4$.
}\label{YZ}
\end{figure}
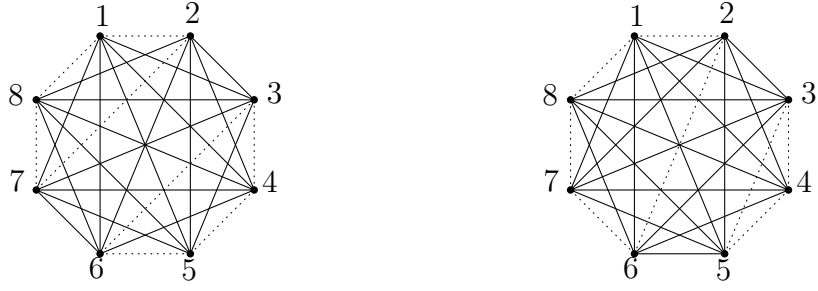

\begin{exam}\label{Z_8}
The graph $G=Z_8$ depicted on the right of Figure~\ref{YZ} is obtained from $K_8$ by deleting two vertex disjoint cycles of order $3$ and $5$ respectively. It is $5$-regular of order $8$ with $\lambda(Z_8)=2$, $\mu(Z_8)=4,$ $\ell_1(Z_8)=8$ and  so is extremal for (\ref{maineq}) with $\ell=8$. Note that $\alpha_1(Z_8)=\alpha_2(Z_8)=8$, $\beta_1(Z_8)=\beta_2(Z_8)=4$ and $\ell_7(Z_8)=(11-\sqrt{5})/2$. Hence $Z_8$ is extremal for the first inequality in Corollary~\ref{eig} and (\ref{ee3}) and is not extremal for other inequalities.
\end{exam}

\begin{exam}
The graph $G=U_8$ depicted on the left of Figure~\ref{U} is $4$-regular of order $8$ with $\lambda(U_8)=0$, $\mu(U_8)=2,$ $\ell_7(U_8)=2,$ so is extremal for
(\ref{maineq}) with $\ell=2$. Note that $\alpha_1(U_8)=\alpha_2(U_8)=8$,  $\beta_1(U_8)=\beta_2(U_8)=2$ and $\ell_1(U_8)=6$. Hence $U_8$ is extremal for  the second inequality in Corollary~\ref{eig} and (\ref{ee5}), and is not extremal for other inequalities.
On the other hand, the complement graph $G=U_8^c$ of $U_8$ depicted on the right of Figure~\ref{U} is $3$-regular of order $8$ with $\lambda(U_8^c)=\mu(U_8^c)=0,$ $\ell_1(U_8^c)=6,$ so is extremal for
(\ref{maineq}) with $\ell=6$. Note that $\alpha_1(U_8^c)=\alpha_2(U_8^c)=6$,  $\beta_1(U_8^c)=\beta_2(U_8^c)=0$ and $\ell_7(U_8^c)=2$.  Hence $U_8^c$ is extremal for  the first inequality in Corollary~\ref{eig} and (\ref{ee3}), and is not extremal for other inequalities.
\end{exam}

\bigskip

\begin{figure}[htb]
\begin{center}
$\begin{array}{ccc}

\begin{tikzpicture}[scale=0.8, line cap=round,line join=round,>=triangle 45,x=1.0cm,y=1.0cm]
\draw (0,4.62)-- (1.5,4.62);
\draw (2.56,3.56)-- (2.56,2.06);
\draw (2.56,2.06)-- (1.5,1);
\draw (1.5,1)-- (2.56,3.56);
\draw (0,1)-- (-1.06,2.06);
\draw (-1.06,2.06)-- (-1.06,3.56);
\draw (-1.06,3.56)-- (0,1);
\draw (0,4.62)-- (2.56,3.56);
\draw (0,4.62)-- (2.56,2.06);
\draw (0,4.62)-- (1.5,1);
\draw (1.5,4.62)-- (-1.06,3.56);
\draw (1.5,4.62)-- (-1.06,2.06);
\draw (1.5,4.62)-- (0,1);
\draw (-1.06,3.56)-- (2.56,3.56);
\draw (2.56,2.06)-- (-1.06,2.06);
\draw (0,1)-- (1.5,1);
\fill [color=black] (0,1) circle (1.5pt);
\draw[color=black] (-0.04,0.78) node {6};
\fill [color=black] (1.5,1) circle (1.5pt);
\draw[color=black] (1.5,0.76) node {5};
\fill [color=black] (2.56,2.06) circle (1.5pt);
\draw[color=black] (2.86,2.22) node {4};
\fill [color=black] (2.56,3.56) circle (1.5pt);
\draw[color=black] (2.94,3.7) node {3};
\fill [color=black] (1.5,4.62) circle (1.5pt);
\draw[color=black] (1.56,5.02) node {2};
\fill [color=black] (0,4.62) circle (1.5pt);
\draw[color=black] (0.06,4.98) node {1};
\fill [color=black] (-1.06,3.56) circle (1.5pt);
\draw[color=black] (-1.36,3.66) node {8};
\fill [color=black] (-1.06,2.06) circle (1.5pt);
\draw[color=black] (-1.34,2.24) node {7};
\end{tikzpicture}&~~~~~~~~~~~~~~~~~~&
\begin{tikzpicture}[scale=0.8, line cap=round,line join=round,>=triangle 45,x=1.0cm,y=1.0cm]
\draw (0.,4.62132034356)-- (-1.06066017178,3.56066017178);
\draw (0.,4.62132034356)-- (-1.06066017178,2.06066017178);
\draw (0.,4.62132034356)-- (0.,1.);
\draw (-1.06066017178,3.56066017178)-- (2.56066017178,2.06066017178);
\draw (-1.06066017178,3.56066017178)-- (1.5,1.);
\draw (-1.06066017178,2.06066017178)-- (2.56066017178,3.56066017178);
\draw (-1.06066017178,2.06066017178)-- (1.5,1.);
\draw (0.,1.)-- (2.56066017178,3.56066017178);
\draw (0.,1.)-- (2.56066017178,2.06066017178);
\draw (1.5,1.)-- (1.5,4.62132034356);
\draw (2.56066017178,2.06066017178)-- (1.5,4.62132034356);
\draw (2.56066017178,3.56066017178)-- (1.5,4.62132034356);

\fill [color=black] (0,1) circle (1.5pt);
\draw[color=black] (-0.04,0.78) node {6};
\fill [color=black] (1.5,1) circle (1.5pt);
\draw[color=black] (1.5,0.76) node {5};
\fill [color=black] (2.56,2.06) circle (1.5pt);
\draw[color=black] (2.86,2.22) node {4};
\fill [color=black] (2.56,3.56) circle (1.5pt);
\draw[color=black] (2.94,3.7) node {3};
\fill [color=black] (1.5,4.62) circle (1.5pt);
\draw[color=black] (1.56,5.02) node {2};
\fill [color=black] (0,4.62) circle (1.5pt);
\draw[color=black] (0.06,4.98) node {1};
\fill [color=black] (-1.06,3.56) circle (1.5pt);
\draw[color=black] (-1.36,3.66) node {8};
\fill [color=black] (-1.06,2.06) circle (1.5pt);
\draw[color=black] (-1.34,2.24) node {7};

\end{tikzpicture}
\end{array}$
\end{center}
\caption{The  graph $U_8$  on the left  has $\lambda(U_8)=0,$ $\mu(U_8)=2$, and its complement graph $U_8^c$ on the right has $\lambda(U_8^c)=\mu(U_8^c)=0$.
}\label{U}
\end{figure}
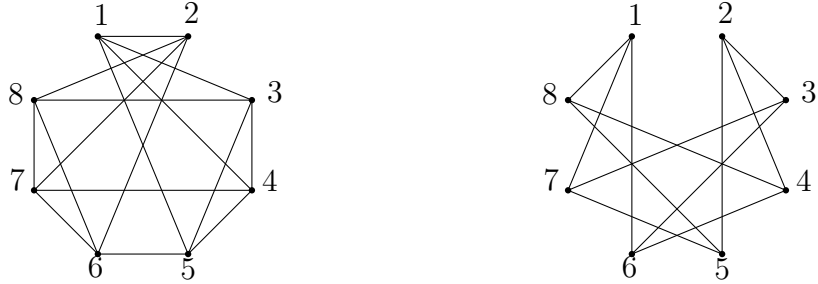
\bigskip

\begin{exam}
Let $G=K_{a, b}$ be the complete bipartite graph of bipartition orders $a$ and $b,$ respectively, where $a<b$  and $n=a+b$. Then $\lambda(K_{a, b})=0$, $\mu(K_{a, b})=a,$ $(d_i,m_i)=(a,b)$ or $(b,a),$ so the lower bound of $\ell_{n-1}(K_{a, b})$ in the second inequality of Corollary~\ref{eig} is $\beta_1(K_{a, b})=\min\{a,(2b+a-\sqrt{a(4b-3a)})/2\}=a$. Also $\ell_{n-1}(K_{a, b})=a$ since $\ell_{n-1}(K_{a, b})\leq \kappa(K_{a, b}) \leq \delta=a.$ Hence $K_{a, b}$ is extremal for (\ref{maineq}) with $\ell=\ell_{n-1}(K_{a, b})$ and the second inequality in Corollary~\ref{eig}. Note that $\alpha_1(K_{a, b})=(2b+a+\sqrt{a(4b-3a)})/2,$ $\alpha_2(K_{a, b})=(2b+a+\sqrt{4b^2-3a^2})/2,$  $\beta_2(K_{a, b})=2a-b$ and $\ell_1(K_{a, b})=a+b$. Hence $K_{a, b}$ is not extremal for other inequalities.
\end{exam}

Next we provide a graph which is extremal only for (\ref{maineq}) about $\ell=\ell_1(G)$.
\begin{exam}
Let $G=F_t$ $(t > 1)$ be a fan of order $n=2t+1$ as depicted in  Figure~\ref{8_4}. Then $\lambda(F_t)=1$, $\mu(F_t)=1,$ $(d_i,m_i)=(2,t+1)$ or $(2t,2),$ and $X=(\mathbf{1}_{2t}^\top,-2t)^\top$ is an eigenvector corresponding to the eigenvalue $\ell_1(F_t)=2t+1$, where $\mathbf{1}$ is all one vector.
 One can check that $F_t$ is extremal for (\ref{maineq}) about $\ell=\ell_1(F_t)$.  On the other hand,  $\alpha_1(F_t)=(2t+\sqrt{2t-1})$, $\alpha_2(F_t)=2t+\sqrt{4t^2-2t-1}$,
 $\beta_1(F_t)=\min\{2t-\sqrt{2t-1},1\}=1\leq \ell_{2t}(F_t) \leq \kappa(F_t)=1$, $\beta_2(F_t)=2-\sqrt{4t^2-2t-1}$, so $F_t$ is also extremal for (\ref{maineq}) with $\ell=\ell_{2t}(F_t)$, the second inequality in Corollary~\ref{eig}, but is not extremal for any other inequalities.
\end{exam}

\begin{figure}[ht]
\begin{center}
\begin{tikzpicture}[scale=0.7, line cap=round,line join=round,>=triangle 45,x=1.0cm,y=1.0cm]
\draw (8.23054678984,-2.31496327933)-- (8.68,-5.1);
\draw (8.68,-5.1)-- (7.58,-5.06);
\draw (7.58,-5.06)-- (8.23054678984,-2.31496327933);
\draw (8.23054678984,-2.31496327933)-- (5.82950666347,-3.79599131255);
\draw (5.82950666347,-3.79599131255)-- (5.44551006917,-2.76441648949);
\draw (5.44551006917,-2.76441648949)-- (8.23054678984,-2.31496327933);
\draw (8.23054678984,-2.31496327933)-- (5.94341702608,-0.66345634102);
\draw (5.94341702608,-0.66345634102)-- (6.74951875663,0.0860768470373);
\draw (6.74951875663,0.0860768470373)-- (8.23054678984,-2.31496327933);
\draw (8.23054678984,-2.31496327933)-- (8.88109357969,0.430073441338);
\draw (8.88109357969,0.430073441338)-- (9.88205372815,-0.0278335155636);
\draw (9.88205372815,-0.0278335155636)-- (8.23054678984,-2.31496327933);
\draw (8.23054678984,-2.31496327933)-- (10.5176765536,-3.96647021764);
\draw (10.5176765536,-3.96647021764)-- (10.9755835105,-2.96551006917);
\draw (10.9755835105,-2.96551006917)-- (8.23054678984,-2.31496327933);
\draw [fill=black] (7.58,-5.06) circle (1.5pt);
\draw[color=black] (7.48,-5.36) node {$2$};
\draw [fill=black] (8.68,-5.1) circle (1.5pt);
\draw[color=black] (8.72,-5.42) node {$1$};
\draw [fill=black] (10.5176765536,-3.96647021764) circle (1.5pt);
\draw[color=black] (10.9,-4.02) node {$2t$};
\draw [fill=black] (10.9755835105,-2.96551006917) circle (1.5pt);
\draw[color=black] (11.7,-2.88) node {$2t-1$};
\draw[color=black] (10.8,-1.35) node {\Large$\vdots$};
\draw [fill=black] (9.88205372815,-0.0278335155636) circle (1.5pt);
\draw[color=black] (10.14,0.18) node {$8$};
\draw [fill=black] (8.88109357969,0.430073441338) circle (1.5pt);
\draw[color=black] (8.94,0.82) node {$7$};
\draw [fill=black] (6.74951875663,0.0860768470373) circle (1.5pt);
\draw[color=black] (6.7,0.48) node {$6$};
\draw [fill=black] (5.94341702608,-0.66345634102) circle (1.5pt);
\draw[color=black] (5.6,-0.56) node {$5$};
\draw [fill=black] (5.44551006917,-2.76441648949) circle (1.5pt);
\draw[color=black] (4.96,-2.72) node {$4$};
\draw [fill=black] (5.82950666347,-3.79599131255) circle (1.5pt);
\draw[color=black] (5.38,-3.84) node {$3$};
\draw [fill=black] (8.23054678984,-2.31496327933) circle (1.5pt);
\draw[color=black] (9.1,-2.11) node {$2t+1$};
\end{tikzpicture}
\end{center}
\caption{The fan graph $F_{t}$ of order $2t+1$ with $\lambda(F_t)=1,$ $\mu(F_t)=1.$}\label{8_4}
\end{figure}
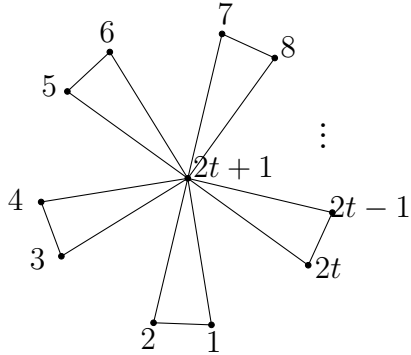

The following table summarizes the extremal graphs mentioned in this section which are not strongly regular.

\begin{table}[ht]
$$
\begin{array}{l|ccc|ccc}
\text{Graph} & \ell_1(G) & \alpha_1(G) & \alpha_2(G) & \ell_{n-1}(G) & \beta_1(G) & \beta_2(G) \\\hline
X_{8} & (7+\sqrt{17})/2 &(7+\sqrt{17})/2 & (7+\sqrt{17})/2  & (7-\sqrt{17})/2& (7-\sqrt{17})/2 & (7-\sqrt{17})/2\\
X_8^c & (9+ \sqrt{17})/2 &  (9+ \sqrt{17})/2 & (9+ \sqrt{17})/2  & (9- \sqrt{17})/2   & (9- \sqrt{17})/2 & (9- \sqrt{17})/2 \\
Y_{8} & 8 & 8 & 8  & 4 & 4 & 4 \\
Z_{8} & 8 & 8 & 8  & (11-\sqrt{5})/2 & 4 & 4 \\
U_{8} & 6 & 8 & 8  & 2 & 2 & 2 \\
U_{8}^c & 6 & 6 & 6  & 2 & 0 & 0 \\
K_{a,b}  & a+b &  x & x & a & a & 2a-b \\
F_{t}  & 2t+1 & y & z  & 1 & 1 & w \\
\hline
\end{array}
$$
$$a<b, t>1, x=\dfrac{2b+a+\sqrt{a(4b-3a)}}{2}, y=2t+\sqrt{2t-1}, z=2t+\sqrt{4t^2-2t-1}, w=2-\sqrt{4t^2-2t-1}.$$
\caption{Extremal graphs that are not strongly regular.}\label{T1}
\end{table}

\section{The $(n-3)$-regular graphs of order $n$} \label{s5}

From now on let $G$ denote an $(n-3)$-regular graph $G$ of order $n.$
Note that $G$ is obtained from the complete graph $K_n$ by deleting
some edges whose union forms  vertex disjoint cycles of order $n$. Denote $G$ in notation $G=K_n-(C_{n_1}\cup C_{n_2}\cup \cdots \cup C_{n_t}),$ where $n_1\geq  n_2\geq  \ldots\geq  n_t\geq 3$ is a nonincreasing integer sequence satisfying $n=n_1+n_2+\cdots+n_t.$ For example, $Y_8=K_8-2C_4$ in Example~\ref{Y_8} and $Z_8=K_8-(C_3\cup C_5)$ in Example~\ref{Z_8}. Note that $G$ is connected iff $n\geq 5.$
If $n=5$ then $G=K_5-C_5=C_5$ is a cycle of order $5$ and is a strongly regular graph with $\ell_1(G), \ell_4(G)=(5\pm\sqrt{5})/2.$ Hence we assume $n\geq 6.$

\begin{prop}\label{n-3}
The $(n-3)$-regular graph $G=K_n-(C_{n_1}\cup C_{n_2}\cup \cdots \cup C_{n_t})$
has $\lambda(G)=n-6$ and
$$\mu(G)= \left\{
           \begin{array}{ll}
             n-3, & \hbox{if $n_i=3$ for all $i$;} \\
             n-4, & \hbox{otherwise.}
           \end{array}
         \right.
$$
Moreover $G$ is strongly regular iff $n_i=3$ for all $1\leq i\leq t,$ and in this case 
$\ell_1(G)=n$ and $\ell_{n-1}(G)=n-3.$  
\end{prop}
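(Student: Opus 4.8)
The plan is to pass to the complement $G^c = C_{n_1} \cup C_{n_2} \cup \cdots \cup C_{n_t}$, which is a $2$-regular graph, and to count common neighbors in $G$ through the (size-$2$) neighborhoods $N^c(i)$ of vertices $i$ in $G^c$. For distinct $i,j$ and $w \notin \{i,j\}$ one has $w \in G_1(i) \cap G_1(j)$ iff $w \notin N^c(i) \cup N^c(j)$, so $|G_1(i)\cap G_1(j)| = n - 2 - |(N^c(i)\cup N^c(j))\setminus\{i,j\}|$. I will split into two cases. If $ij \in E(G)$ — equivalently $j \notin N^c(i)$ and $i \notin N^c(j)$ — then $(N^c(i)\cup N^c(j))\setminus\{i,j\} = N^c(i)\cup N^c(j)$ has size $4 - c$, where $c := |N^c(i)\cap N^c(j)|$ is the number of common neighbors of $i,j$ in $G^c$; hence $|G_1(i)\cap G_1(j)| = n - 6 + c$. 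If $ij \notin E(G)$ then $i,j$ are consecutive on some summand $C_{n_s}$, so $N^c(i)=\{j,a\}$ and $N^c(j)=\{i,b\}$ with $a,b$ the other cycle-neighbors; one checks $a,b \notin\{i,j\}$, and $a=b$ exactly when $i,j,a$ form a triangle, i.e. $n_s=3$. Thus a nonadjacent pair of $G$ contributes $n-3$ if it lies on a $C_3$ and $n-4$ if it lies on a cycle of length $\geq 4$.

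From this, $\lambda(G) = n - 6 + \min c$ over pairs adjacent in $G$. In a disjoint union of cycles, $c = 0$ for two vertices nonadjacent in $G^c$ unless they lie at distance $2$ on a common cycle; since $n \geq 6$ there always is a pair adjacent in $G$ with $c = 0$ (two vertices in different cycles when $t \geq 2$, or two vertices at distance $\geq 3$ on $C_n$ when $t = 1$), so $\lambda(G) = n-6$. For $\mu(G)$, every nonadjacent pair of $G$ contributes $n-4$ or $n-3$, and $n-4$ occurs precisely when some $n_s \geq 4$, which gives the stated dichotomy.

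For the strong-regularity characterization, recall $G$ is strongly regular iff $|G_1(i)\cap G_1(j)|$ is constant over adjacent pairs and constant over nonadjacent pairs. If all $n_i = 3$ then $G^c = tK_3$ is a disjoint union of triangle-components; every adjacent pair of $G$ lies in distinct components of $G^c$, giving $c = 0$ and the constant value $n-6$, and every nonadjacent pair of $G$ is a triangle edge, giving the constant value $n-3$ — so $G$ is strongly regular with parameters $(n,\,n-3,\,n-6,\,n-3)$. Conversely, suppose not all $n_i$ equal $3$. If both a $C_3$ and some $C_m$ with $m \geq 4$ occur among the summands, then nonadjacent pairs realize both $n-3$ and $n-4$, so $\mu$ is not well-defined for an SRG; if instead every $n_i \geq 4$, then a distance-$2$ pair on one of the cycles is an adjacent pair of $G$ with $c \geq 1$, while (using $n \geq 6$ exactly as above) some other adjacent pair has $c = 0$, so $|G_1(i)\cap G_1(j)|$ is nonconstant on adjacent pairs. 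Either way $G$ is not strongly regular.

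Finally, in the strongly regular case the parameters $(n,k,\lambda(G),\mu(G)) = (n,\,n-3,\,n-6,\,n-3)$ satisfy $n = 3t \geq 6 \neq k+1$, so \eqref{srg} applies: $2k-\lambda(G)+\mu(G) = 2n-3$ and $(\lambda(G)-\mu(G))^2 + 4(k-\mu(G)) = 9 + 0 = 9$, whence $\ell_1(G),\ell_{n-1}(G) = (2n-3 \pm 3)/2 = n,\,n-3$; alternatively one reads this directly from $L(G) = nI - J - L(G^c)$ using that $L(tK_3)$ has spectrum $3^{(2t)},0^{(t)}$. The main obstacle is the careful bookkeeping in the first paragraph — in particular keeping track of when $j \in N^c(i)$ and when the two outer cycle-neighbors $a,b$ coincide — together with the converse of the strong-regularity statement, where one must separately dispose of a mixture of cycle lengths and of the all-cycles-of-length-$\geq 4$ situation.
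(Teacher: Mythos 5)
Your proposal is correct and follows essentially the same route as the paper: both count $|G_1(i)\cap G_1(j)|$ through the two deleted-cycle neighbors of each vertex, find a $\lambda$-attaining adjacent pair via the same dichotomy (one long cycle versus vertices in different cycles, using $n\geq 6$), note $a=b$ exactly for triangle edges to get $\mu(G)$, and read off $\ell_1(G)=n$, $\ell_{n-1}(G)=n-3$ from \eqref{srg}. The only cosmetic difference is that you verify the failure of strong regularity directly on $G$ by exhibiting nonconstant common-neighbor counts, whereas the paper invokes that a disjoint union of cycles (the complement) is strongly regular only when all cycles are triangles.
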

\begin{proof}
As $G$ is $(n-3)$-regular,  $|G_1(i)\cap G_1(j)|\geq n-6$ for $i, j\in V.$
To prove $\lambda(G)=n-6$, we need to choose $ij\in E$ such that in the deleting cycles
that $i, j$ belong, two neighbors of $i$ do not overlap the two neighbors of $j$.
This can be done if $n_1\geq 6$ of course, and also can be done if $n_1<6$ since then as $n\geq 6$,
there are two deleting cycles and we can choose $i, j$ in different cycles.

If $i'j'\not\in E$ then they are in the same deleting cycle and are adjacent in the cycle.
Hence $|G_1(i')\cap G_1(j')|\geq n-4$, where the excluding four vertices are  $i', j',$ the other neighbor $a$ of $i'$, the other neighbor $b$ of $j'$, and $a=b$ iff $i', j'$ are inside $C_3$. This proves the line of $\mu(G).$

 The only possible for a union of cycles to be strongly regular is when the cycles are all triangles. 
Hence $G$ is strongly iff $n_i=3$ for all $i$. The $\ell_1(G)=n$ and $\ell_{n-1}(G)=n-3$ are determined from (\ref{srg}) by using $\lambda=n-6$ and $\mu=n-3$. Therefore, we complete the proof.
\end{proof}

The Laplacian eigenvalues of a cycle is well-known \cite[Section 1.4.3]{spectra}, indeed \begin{equation}\label{cycle1}
 \ell_1(C_{s})=\left\{ \begin{array}{cl} 4, & s \text{ is even};  \\ 2+2\cos(\pi/s), & s \text{ is odd},   \end{array} \right.\end{equation} and \begin{equation}\label{cyclen-1}\ell_{s-1}(C_s)=2-\cos(2\pi/s).\end{equation}

\begin{prop}\label{n-3_2}
The $(n-3)$-regular graph $G=K_n-(C_{n_1}\cup C_{n_2}\cup \cdots \cup C_{n_t})$ with some $m_i>3$ has
$\alpha_1(G)=\alpha_2(G)=n,$ $\beta_1(G)=\beta_2(G)=n-4,$
$$\ell_1(G)=\left\{ \begin{array}{cl} n-\cos(2\pi/n), & t=1;  \\ n, & t\geq2   \end{array} \right.$$  and  $$\ell_{n-1}(G)=\left\{ \begin{array}{cl} n-2-2\cos(\pi/n_1), & n_i \text{ is odd for all }i;   \\ n-4, & \text{otherwise.}   \end{array} \right.$$
\end{prop}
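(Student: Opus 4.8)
The plan is to relate the Laplacian spectrum of $G=K_n-(C_{n_1}\cup\cdots\cup C_{n_t})$ to the spectrum of the disjoint union $H=C_{n_1}\cup\cdots\cup C_{n_t}$ via complementation, and then read off the eigenvalues of $H$ from the known cycle spectra \eqref{cycle1}, \eqref{cyclen-1}. Since $G=H^c$ and $L(G)+L(H)=nI-J$, the Laplacian matrices share eigenvectors, and every eigenvalue $\ell$ of $L(H)$ with eigenvector $X\perp\mathbf 1$ gives the eigenvalue $n-\ell$ of $L(G)$; the trivial eigenvalue $0$ of $L(G)$ corresponds to the all-ones vector, while the $t-1$ ``extra'' zeros of $L(H)$ (one per connected component of $H$) become the eigenvalue $n$ of $L(G)$ with multiplicity $t-1$. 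Thus $\ell_1(G)=n-\ell_{\min}^{\neq 0}(L(H))$ where $\ell_{\min}^{\neq 0}$ is the smallest \emph{nonzero} eigenvalue of $L(H)$ when $t=1$, but for $t\geq 2$ there is at least one nontrivial zero eigenvalue of $L(H)$ so $\ell_1(G)=n$; dually $\ell_{n-1}(G)=n-\ell_1(L(H))$, where $\ell_1(L(H))=\max_i \ell_1(C_{n_i})$.

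First I would record the eigenvalue dictionary: the eigenvalues of $L(C_s)$ are $2-2\cos(2\pi j/s)$ for $j=0,1,\dots,s-1$, so $\ell_1(C_s)$ is as in \eqref{cycle1} and the smallest nonzero one is $2-2\cos(2\pi/s)$ (attained at $j=1$ and $j=s-1$). Hence for $t=1$, $G=K_n-C_n$ and $\ell_1(G)=n-(2-2\cos(2\pi/n))$; but wait --- one must double-check the sign and the exact claimed value $n-\cos(2\pi/n)$ in the statement. Here is the subtlety: $2\cos(2\pi/n)\neq\cos(2\pi/n)$, so either the statement intends a different normalization or there is a typo; I would verify directly on a small case ($n=6$, $G=K_6-C_6$, which is $K_{3,3}$, with $\ell_1=6$) to pin down the correct formula before committing, and present the corrected version. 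For $\ell_{n-1}(G)=n-\max_i\ell_1(C_{n_i})$: if every $n_i$ is odd then $\max_i\ell_1(C_{n_i})=2+2\cos(\pi/n_1)$ (the largest is at the largest odd $n_i$, using that $2+2\cos(\pi/s)$ increases in $s$), giving $n-2-2\cos(\pi/n_1)$; if some $n_i$ is even then that cycle contributes $\ell_1=4$, which dominates all the odd contributions $2+2\cos(\pi/n_j)<4$, so $\ell_{n-1}(G)=n-4$.

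The remaining pieces are quick. The hypothesis ``some $m_i>3$'' excludes exactly the all-triangles case (where $Y=K_n-tC_3$ is strongly regular, handled in Proposition~\ref{n-3}), and in the all-triangles case $n_1=3$ is odd but gives $\ell_{n-1}=n-3=n-2-2\cos(\pi/3)$, consistent with the formula but with $\lambda,\mu$ different; the point of the hypothesis is to guarantee $\mu(G)=n-4$ via Proposition~\ref{n-3}, so that Corollary~\ref{Ddelta} applied with $k=\delta=\Delta=n-3$, $\lambda=n-6$, $\mu=n-4$ yields $\alpha_2(G)=\tfrac{2(n-3)-(n-6)+(n-4)+\sqrt{(2(n-3)-(n-6)+(n-4))^2-4(n-4)n}}{2}$, which I would simplify: the bracket is $2n-4$, and $(2n-4)^2-4n(n-4)=4n^2-16n+16-4n^2+16n=16$, so $\alpha_2(G)=\tfrac{(2n-4)+4}{2}=n$, and dually $\beta_2(G)=\tfrac{(2n-4)-4}{2}=n-4$. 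Since $G$ is regular, $\alpha_1(G)=\alpha_2(G)=n$ and $\beta_1(G)=\beta_2(G)=n-4$ as well (the vertex-wise max/min collapse). Finally one checks $n-4\leq\ell_{n-1}(G)$ and $\ell_1(G)\leq n$ are consistent with the computed exact values --- indeed $\ell_1(G)\leq n$ always holds and equals $n$ iff $t\geq 2$, while for $t=1$ it is strictly below $n$ --- which also shows $K_n-C_n$ fails to be extremal for \eqref{ee3}, matching the ``class of exceptions'' remark in the introduction. The main obstacle is purely bookkeeping: correctly tracking the multiplicity of the eigenvalue $n$ of $L(G)$ coming from the $t$ components of $H$, and reconciling the trigonometric formula in the $t=1$ case with \eqref{cyclen-1}.
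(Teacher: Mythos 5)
Your approach is the same as the paper's: complementation ($L(G)+L(H)=nI-J$ with $H=C_{n_1}\cup\cdots\cup C_{n_t}$, nontrivial eigenvalues mapping $\ell\mapsto n-\ell$), the known cycle spectra, and the substitution $\lambda=n-6$, $\mu=n-4$ (regularity making the vertex-wise max/min collapse) to get $\alpha_1=\alpha_2=n$, $\beta_1=\beta_2=n-4$; the $\ell_{n-1}(G)$ case analysis is handled exactly as in the paper. You are also right to be suspicious of the $t=1$ value: the paper's \eqref{cyclen-1} drops a factor of $2$ (the algebraic connectivity of $C_s$ is $2-2\cos(2\pi/s)$), and the displayed $n-\cos(2\pi/n)$ is a further slip; the value your general argument produces, $n-2+2\cos(2\pi/n)$, is the correct one. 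However, the sanity check you propose to ``pin down'' the formula is itself wrong: $K_6-C_6$ is the complement of $C_6$, i.e.\ the triangular prism (it contains the triangles $135$ and $246$), with $\ell_1=5=6-2+2\cos(\pi/3)$, not $K_{3,3}$; the graph $K_{3,3}$ is $K_6-2C_3$, which is the all-triangles (strongly regular) case excluded by the hypothesis. Had you trusted that check you would have ``corrected'' the formula to the wrong value $\ell_1(K_n-C_n)=n$; rely instead on your complementation computation, which already settles the $t=1$ case.
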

\begin{proof} Applying $\lambda(G)=n-6$ and $\mu(G)=n-4$ to the definitions, one finds  
$\alpha_1(G)=\alpha_2(G)=n$ and $\beta_1(G)=\beta_2(G)=n-4$ immediately. From (\ref{cyclen-1}), 
$$\ell_1(G)=n-\ell_{n-1}(C_{n_1}\cup C_{n_2}\cup \cdots \cup C_{n_t})=\left\{
                                                                        \begin{array}{ll}
                                                                          n-\cos(2\pi/n), & \hbox{$t=1$;} \\
                                                                          n, & \hbox{$t\geq 2$.}
                                                                        \end{array}
                                                                      \right.$$ 
From (\ref{cycle1}), 
$$\ell_{n-1}(G)=n-\ell_{1}(C_{n_1}\cup C_{n_2}\cup \cdots \cup C_{n_t})
=\left\{
                                                                          \begin{array}{ll}
                                                                            n-2-2\cos(\pi/n_1), & \hbox{if $n_i$ is odd for all $i$;} \\
                                                                            n-4, & \hbox{otherwise.}
                                                                          \end{array}
                                                                        \right.$$
\end{proof}

From Proposition \ref{n-3_2}, we find that if $t\geq2$ and $n_i$ is even for some $i$ then the regular $(n-3)$-regular graph $G=K_n-(C_{n_1}\cup C_{n_2}\cup \cdots \cup C_{n_t})$ is extremal for (\ref{maineq}) with $\ell=\ell_1(G),\ell_{n-1}(G)$, the three inequalities in Corollary~\ref{eig}, (\ref{ee3}), (\ref{ee5}) and (\ref{ee7}).

\section*{Acknowledgments}

This research is supported by the
Ministry of Science and Technology of Taiwan R.O.C.
under the project NSC 102-2115-M-009-009-MY3.
\bigskip
\section*{References}

\end{document}